\crefname{hypothesis}{Hypothesis}{Hypotheses}
\title{Computing monomial interpolating basis for multivariate polynomial interpolation\thanks{Submitted to the editors DATE.
\funding{This work was supported  by National Natural Science Foundation of China under Grant No. 11671169 and 11901402.}}}
\author{Yihe Gong\thanks{College of Science, Northeast Electric Power University, Jilin, China
  (\email{yhegong@163.com}).}
\and Xue Jiang\thanks{School of Mathematics and Systematic Sciences, Shenyang Normal University,  Shenyang, China
  (\email{littledonna@163.com}).}
\and Baoxin Shang\thanks{Corresponding author, College of Science, Northeast Electric Power University, Jilin, China 
  (\email{shbxin@163.com}).}
 }
	\newtheorem{example}{Example}
	\DeclareMathOperator{\LM}{LM}
	\DeclareMathOperator{\lm}{lm}	
	\DeclareMathOperator{\e}{e}
	\DeclareMathOperator{\grlex}{grlex}
	\newcommand{\numfld}[1]{\mathbb{#1}}
	\newcommand{\Z}{\numfld{Z}}
	\newcommand{\F}{\numfld{F}}
	\newcommand{\R}{\numfld{R}}
	\newcommand{\C}{\numfld{C}}
\begin{document}

\maketitle

\begin{abstract}
  In this paper, we study how to quickly compute the $\prec$-minimal monomial interpolating basis for a multivariate polynomial interpolation problem. We address the notion of ``reverse" reduced basis of linearly independent polynomials and design an algorithm for it. Based on the notion, for any monomial ordering we present a new method to read off the $\prec$-minimal monomial interpolating basis from monomials appearing in the polynomials representing the interpolation conditions.
\end{abstract}

\begin{keywords}
   Multivariate polynomial interpolation,  monomial interpolating basis, $\prec$-minimal
\end{keywords}

\begin{AMS}
  41A05, 41A63, 41A10

\end{AMS}

\section{Introduction}
Let $\F$ be either the real field $\R$ or the complex field $\C$. Polynomial interpolation is to construct a polynomial $g$ belonging to a finite-dimensional subspace of $\F[{\bf X}]$ from a set of data that agrees with a given function $f$ at the data set, where $\F[{\bf X}] := \F[x_1, x_2, \dots, x_d]$ denotes the polynomial ring in $d$ variables over the field $\F$.

Univariate polynomial interpolation has a well-developed theory, while the multivariate one is very problematic since a multivariate polynomial interpolation  problem is determined not only by the number of the interpolation points, but also by the geometry of the data set, see \cite{Birkhoff1979, gasca1982on, gasca2000on}.

In the multivariate polynomial interpolation theory, one of the most important targets is  to find an interpolating basis, usually of minimal degree. As is well known, the most significant milestone of computation of interpolating basis is the algorithm called the MMM's algorithm, which is presented in \cite{MMM1993} by M. G. Marinari, H. M. M\"{o}ller and T. Mora.  For a point set $\Theta \subset \F^d$ and a fixed monomial ordering $\prec$, the MMM's algorithm yields a $\prec$-minimal monomial interpolating basis for a $d$-variate Hermite interpolation on $\Theta$. However the complexity of the algorithm sometimes limites its applications. In recent years, many authors have proposed new algorithms that can reduce the complexity, but most of them just deal with special cases \cite{CERLIENCO199573, FELSZEGHY2006663}.  Such as, one algorithm called Lex game algorithm \cite{FELSZEGHY2006663} can produce, with relatively small cost, an interpolating basis w.r.t. a (inverse) lexicographic ordering. Through a large number of examples, we observed that the polynomials, which represent the interpolation conditions, must contain some monomial interpolating basis. So we address a new method to read off a monomial interpolating basis from them. Moreover, the monomial interpolating basis is of minimal degree.  The research is based on ideal interpolation, hence it is necessary to introduce the concept of ideal interpolation.

For studying multivariate polynomial interpolation, Birkhoff \cite{Birkhoff1979} first introduces the definition of ideal interpolation.  Ideal interpolation can be defined by a linear idempotent projector whose kernel is a polynomial ideal. In ideal interpolation \cite{DeBoor2004}, the interpolation conditions at an interpolation point $\boldsymbol{\uptheta} \in \F^d$ can be described by a linear space ${\rm span}\{\delta_{\boldsymbol{\uptheta}} \circ P(D), P \in P_{\boldsymbol{\uptheta}}\}$, where $P_{\boldsymbol{\uptheta}}$ is a $D$-invariant polynomial subspace, $\delta_{\boldsymbol{\uptheta}}$ is the evaluation functional at $\boldsymbol{\uptheta}$ and $P(D)$ is the differential operator induced by $P$. Lagrange interpolation is a standard example where all $P_{\boldsymbol{\uptheta}}={\rm span}\{1\}$. Ideal interpolation provides a natural link between polynomial interpolation and algebraic geometry.

The paper is organized as follows. The notion of ``reverse" reduced basis and the algorithm are described in   \cref{sec:alg}. Based on the new algorithm, we show how to read off a monomial interpolating basis for a interpolation problem at the zero point in \cref{sec:zeropoint}. Our main results are presented in \cref{sec:severalpoints}, and the conclusions follow in
\cref{sec:conclusions}.

\section{Preliminaries}
\label{sec:Preliminaries}

Throughout the paper, $\Z_{\ge 0}$ denotes the set of nonnegative integers. Let  $\Z_{\ge 0}^d := \{ (\alpha_1, \alpha_2, \dots, \alpha_d) \mid  \alpha_i \in \Z_{\ge 0}\}$.  For  $\alpha:=(\alpha_1, \alpha_2, \dots, \alpha_d) \in \Z_{\ge 0}^d$, $\alpha!:=\alpha_1!\alpha_2!\cdots\alpha_d!$ and we write ${\bf X}^\alpha$ for the monomial $x_1^{\alpha_1} x_2^{\alpha_2} \cdots x_d^{\alpha_d}$.  A polynomial $P \in \F[{\bf X}]$ can be considered as the formal power series
\begin{displaymath}
    P=\sum_{\alpha \in \Z_{\ge 0}^d} \hat{P}(\alpha) {\bf X}^\alpha,
\end{displaymath}
where $\hat{P}(\alpha)$'s are the coefficients in the polynomial $P$.

$P(D):=P(D_{x_1}, D_{x_2}, \dots, D_{x_d})$ is the differential operator induced by the polynomial $P$, where $D_{x_j}:=\frac{\partial}{\partial x_j}$ is the differentiation with respect to the $j$th variable, $j=1, 2, \dots, d$.

Let $D^{\alpha}:=D_{x_1}^{\alpha_1}D_{x_2}^{\alpha_2}\cdots D_{x_d}^{\alpha_d}$, the differential polynomial is defined as
\begin{displaymath}
    P(D):=\sum_{\alpha\in\Z_{\ge 0}^d} \hat{P}(\alpha)D^{\alpha}.
\end{displaymath}

Given a monomial ordering $\prec$, the leading monomial of a polynomial $P \in \F[{\bf X}]$ w.r.t. $\prec$ is defined by
\begin{displaymath}
    \LM(P) := \max_{\prec} \{ {\bf X}^{\alpha} \mid \hat{P}(\alpha) \neq 0\},
\end{displaymath}
and the least monomial of  the polynomial $P$ w.r.t $\prec$ is defined by
\begin{displaymath}
    \lm (P) := \min_{\prec}  \{ {\bf X}^{\alpha} \mid \hat{P}(\alpha) \neq 0\}.
\end{displaymath}

    For example, fixing the monomial ordering $\grlex(y \prec x)$, for $P=\frac{1}{6} x^3 + x y + y \in \F[x, y]$, we have
	\begin{displaymath}
	    \LM(P) = x^3, \quad \lm (P) =y.
	\end{displaymath}

	\begin{definition} \rm
	    We denote by $\Lambda\{P_1, P_2, \dots, P_n\}$ the set of all monomials that occur in  the polynomials $P_1, P_2, \dots, P_n$ with nonzero coefficients.
	\end{definition}
	
For example, let $P_1=1, P_2=x, P_3=\frac{1}{2}x^2 +y$, then
	    \begin{displaymath}
	        \Lambda \{P_1, P_2, P_3\} = \{1, x, y, x^2\}.
	    \end{displaymath}

	\begin{definition} \rm
	    $\{P_1, P_2, \dots, P_n\} \subset \F[{\bf X}]$ is called a reduced basis, if
		\begin{enumerate}
		 \item $P_1, P_2, \dots, P_n$ are linearly independent,
		 \item $\LM(P_i) \not\in \Lambda \{P_j\}, 1 \le i < j \le n$.
		\end{enumerate}
	\end{definition}

	\begin{example} Fixing the monomial ordering $\grlex(z \prec y \prec x)$,
	    \begin{displaymath}
			\begin{array}{c}
			    \{1, x, \frac{1}{2}x^2 + y, \frac{1}{6} x^3 + xy + y\}, \\[0.1cm]
			    \{1, y+z, x\}, \\ [0.1cm]
				\{1, y+z, x+z\}
			\end{array}
	    \end{displaymath}
		are reduced bases.
	\end{example}

	\begin{definition} \rm
	    $\{P_1, P_2, \dots, P_n\} \subset \F[{\bf X}]$ is called a ``reverse" reduced basis, if
		\begin{enumerate}
		 \item $P_1, P_2, \dots, P_n$ are linearly independent,
		 \item $\lm(P_i) \not\in \Lambda \{P_j\}, 1 \le i < j \le n$.
		\end{enumerate}
	\end{definition}

	\begin{example} Fixing the monomial ordering $\grlex(z \prec y \prec x)$,
	    \begin{displaymath}
			\begin{array}{c}
			    \{1, x, \frac{1}{2}x^2 + y, \frac{1}{6} x^3 - \frac{1}{2}x^2 + xy\}, \\[0.1cm]
			    \{1, y+z, x\}, \\
				\{1, y+z, x-y\}
			\end{array}
	    \end{displaymath}
		are ``reverse" reduced bases.  	
	\end{example}

	\section{The algorithm to compute a ``reverse" reduced bases} \label{sec:alg}
	For a monomial ordering $\prec$ and linearly independent polynomials $P_1, P_2, \dots, P_n \in \F[{\bf X}]$, \cref{alg:revredbasis} computes a ``reverse" reduced basis.
	\begin{algorithm}
	\caption{A ``reverse" reduced basis}
	\label{alg:revredbasis}
	\begin{algorithmic}[1]
	\STATE{{\bf Input:} $P_1, P_2, \dots, P_n \in \F[{\bf X}]$, linearly independent polynomials}
	\STATE{\hspace*{1.1cm}$\prec$, a monomial ordering}
	\STATE{{\bf Output:} $\{P_1^{(n-1)}, P_2^{(n-1)}, \dots, P_n^{(n-1)}\}$, a ``reverse" reduced basis}	
	\STATE{//Initialization}
	\STATE{$P_1^{(0)}=P_1, P_2^{(0)}=P_2, \dots, P_n^{(0)}=P_n$;}
	\STATE{//Computing}
	\FOR{$k=1:n-1$}
	\STATE{$P_j^{(k)}=P_j^{(k-1)}, 1 \le j \le k$;}
		\FOR{$j=k+1:n$}
			\STATE{${\bf X}^{\beta_k^{(k-1)}} = \lm (P_k^{(k-1)})$;}
			\STATE{$P_j^{(k)}=P_j^{(k-1)} - \left( \frac{\hat{P}_j^{(k-1)}(\beta_k^{(k-1)})}{\hat{P}_k^{(k-1)}(\beta_k^{(k-1)})}\right) P_k^{(k-1)}$;}
		\ENDFOR
	\ENDFOR
	\RETURN $\{P_1^{(n-1)}, P_2^{(n-1)}, \dots, P_n^{(n-1)}\}$;
	\end{algorithmic}
	\end{algorithm}

	 Notice that $\hat{P}_k^{(k-1)}(\beta_k^{(k-1)})$ is the coefficient of the least monomial of ${P}_k^{(k-1)}$, so it is nonzero. It is obvious that \cref{alg:revredbasis} terminates. The following theorem shows its correctness.

	\begin{theorem}
	    A set of linearly independent polynomials can be transformed into a ``reverse" reduced basis.
	\end{theorem}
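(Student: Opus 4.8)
The plan is to prove the theorem constructively, by showing that \cref{alg:revredbasis} performs the required transformation: its output $Q_i := P_i^{(n-1)}$, $1\le i\le n$, is a ``reverse'' reduced basis. First I would record the structural feature of the algorithm that, at stage $k$, the polynomials $P_1^{(k)},\dots,P_k^{(k)}$ are left untouched, so each $Q_i$ is already frozen at the end of stage $i$; in particular $Q_i = P_i^{(i-1)}$, and the pivot used in stage $k$ is exactly $Q_k = P_k^{(k-1)}$, whose least monomial is ${\bf X}^{\beta_k^{(k-1)}} = \lm(Q_k)$. Because ${\bf X}^{\beta_k^{(k-1)}}$ is the least monomial of the pivot, the coefficient $\hat{P}_k^{(k-1)}(\beta_k^{(k-1)})$ is nonzero whenever $Q_k\neq 0$, so each update is well defined.

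For condition (1), linear independence, I would unfold the update rule to see that $Q_j = P_j - \sum_{k<j} c_{jk}\,Q_k$ for suitable scalars $c_{jk}$. An easy induction on $j$ then shows $Q_j \in \mathrm{span}\{P_1,\dots,P_j\}$ with the coefficient of $P_j$ equal to $1$; hence the matrix expressing $(Q_1,\dots,Q_n)$ in terms of $(P_1,\dots,P_n)$ is lower triangular with unit diagonal, thus invertible. Consequently the $Q_i$ inherit linear independence from the $P_i$, and as a byproduct each $Q_k\neq 0$, which retroactively justifies the well-definedness noted above.

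The core of the argument is condition (2), $\lm(Q_i)\notin \Lambda\{Q_j\}$ for $i<j$. I would establish by induction on $k$ the invariant
\[
I(k):\quad \text{for all } 1\le i\le k \text{ and all } i<j\le n,\ \text{the coefficient of } \lm(Q_i) \text{ in } P_j^{(k)} \text{ is } 0.
\]
The base case $I(0)$ is vacuous, and $I(n-1)$ is precisely condition (2) since $Q_j=P_j^{(n-1)}$. For the inductive step, stage $k$ modifies only the $P_j^{(k)}$ with $j>k$, via $P_j^{(k)} = P_j^{(k-1)} - c_{jk}\,Q_k$ with $c_{jk}$ chosen to annihilate the coefficient of $\lm(Q_k)$; this settles the case $i=k$ by construction. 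For $i<k$ I would split on $j$: when $j\le k$ the polynomial is unchanged and $I(k-1)$ applies directly, while for $j>k$ one checks that the coefficient of $\lm(Q_i)$ in $P_j^{(k)}$ equals its coefficient in $P_j^{(k-1)}$ minus $c_{jk}$ times its coefficient in $Q_k$, both of which vanish by $I(k-1)$.

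The step I expect to be the main obstacle — and the point the whole scheme turns on — is the claim that subtracting the pivot $Q_k$ at stage $k$ cannot resurrect a least monomial $\lm(Q_i)$ (with $i<k$) that was cleared in an earlier stage. This is exactly the assertion that $\lm(Q_i)\notin\Lambda\{Q_k\}$ for every $i<k$, which is not evident a priori; however, it falls out of $I(k-1)$ itself by specializing $j=k$, since $Q_k = P_k^{(k-1)}$. Thus the invariant is strong enough to feed its own induction, and this is why formulating condition (2) as the stage-indexed statement $I(k)$, rather than reasoning only about the final output, is the essential move. Notably, the example $\{1,x,\tfrac12 x^2+y,\dots\}$ shows that the pivot least monomials need \emph{not} be monotone in $\prec$, so a naive ``increasing least monomial'' argument fails and the invariant-based approach is genuinely required.
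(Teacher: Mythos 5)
Your proposal is correct and takes essentially the same approach as the paper: both arguments verify \cref{alg:revredbasis} by induction on the stage $k$, maintaining exactly the invariant that the frozen polynomials $P_1^{(k)},\dots,P_k^{(k)}$ are linearly independent and that their least monomials no longer occur in any later polynomial, with the crucial self-feeding step (the pivot $Q_k=P_k^{(k-1)}$ is itself already free of the earlier $\lm(Q_i)$, by the invariant specialized to $j=k$) appearing in the paper as the monomial-set containment $\Lambda\{P_{i+1}^{(t+1)},\dots,P_n^{(t+1)}\}\subseteq\Lambda\{P_{i+1}^{(t)},\dots,P_n^{(t)}\}$. Your unit-lower-triangular change-of-basis argument for linear independence is a slightly tidier phrasing of the paper's explicit ``suppose a combination vanishes'' computation, but the content is the same.
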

	
	\begin{proof}
	    According to Line 11 in \cref{alg:revredbasis}, it is obvious that
		\begin{equation} \abovedisplayskip=2pt \belowdisplayskip=2pt \label{eqn:thm1:proof:2}
			\lm(P_k^{(k-1)}) = {\bf X}^{\beta_k^{(k-1)}} \not\in \Lambda\{P_j^{(k)}\},\; 1 \le k < j \le n.
		\end{equation}
		Denote $b_j^{(k-1)} = \frac{\hat{P}_j^{(k-1)}(\beta_k^{(k-1)})}{\hat{P}_k^{(k-1)}(\beta_k^{(k-1)})}$, then Line 11 in \cref{alg:revredbasis} becomes
		\begin{equation} \abovedisplayskip=2pt \belowdisplayskip=2pt
		    \label{eqn:thm1:proof:3}
			P_j^{(k)} = P_j^{(k-1)} - b_j^{(k-1)}P_k^{(k-1)}, 1 \le k < j \le n.
		\end{equation}
		Next, we prove that for each $k \in \{1, 2, \dots, n-1\}$,

		1. $P_1^{(k)}, P_2^{(k)}, \dots, P_n^{(k)}$ are linearly independent,

		2. $\lm(P_i^{(k)}) \not\in \Lambda\{P_j^{(k)}\}, j=i+1, \dots, n, i=1, 2, \dots, k$.
		
		Use induction on $k$.

		If $k=1$, from Line 8 in the algorithm and \cref{eqn:thm1:proof:3}, we get
		\begin{displaymath}
		    P_1^{(1)}=P_1^{(0)}, P_2^{(1)}=P_2^{(0)} - b_2^{(0)}P_1^{(0)}, \dots, P_n^{(1)}=P_n^{(0)} - b_n^{(0)}P_1^{(0)}.
		\end{displaymath}
		Suppose that there exsits $c_1, c_2, \dots, c_n$ satisfying
		\begin{displaymath}
		    c_1 P_1^{(1)} + c_2 P_2^{(1)} + \dots + c_n P_n^{(1)} =0,
		\end{displaymath}
		then
		\begin{displaymath}
		    c_1 P_1^{(0)} + c_2(P_2^{(0)} - b_2^{(0)}P_1^{(0)}) + c_n (P_n^{(0)} - P_n^{(0)}) =0,
		\end{displaymath}
		which means
		\begin{displaymath}
		    (c_1 - c_2 b_2^{(0)} - \dots - c_n b_n^{(0)}) P_1^{(0)} + c_2 P_2^{(0)} + \dots + c_n P_n^{(0)} =0,
		\end{displaymath}
		i.e.,
		\begin{displaymath}
			(c_1 - c_2 b_2^{(0)} - \dots - c_n b_n^{(0)}) P_1 + c_2 P_2 + \dots + c_n P_n =0.
		\end{displaymath}
		Since $P_1, P_2, \dots, P_n$ are linearly independent, we get $c_1=c_2=\dots=c_n =0$. Thus $P_1^{(1)}, P_2^{(1)}, \dots, P_n^{(1)}$ are linearly independent.  From \cref{eqn:thm1:proof:2}, we have $\lm(P_1^{(0)}) \not\in \Lambda\{P_j^{(1)}\}, i=2, 3, \dots, n$.  Due to $P_1^{(1)}=P_1^{(0)}$, we get $\lm(P_1^{(1)}) \not\in \Lambda\{P_j^{(1)}\}, j=2, 3, \dots, n$. In summary, if $k=1$, the conclusions hold.
		
		Suppose that if $k=t\, (1<t < n-1)$, the conclusions hold.  It means that $P_1^{(t)}, P_2^{(t)}, \dots, P_n^{(t)}$ are linearly independent and $\lm(P_i^{(t)}) \not\in \Lambda\{P_j^{(t)}\}, j=i+1, \dots, n, i=1, 2, \dots, t$.

		If $k=t+1$,  from Line 8 in the algorithm and \cref{eqn:thm1:proof:3},  we get
		\begin{displaymath}
		    \begin{aligned} \abovedisplayskip=2pt \belowdisplayskip=2pt
		        P_1^{(t+1)} & = P_1^{(t)}, P_2^{(t+1)} = P_2^{(t)}, \dots, P_{t+1}^{(t+1)}  = P_{t+1}^{(t)}, \\
		        P_{t+2}^{(t+1)} & = P_{t+2}^{(t)} - b_{t+2}^{(t)} P_{t+1}^{(t)}, \\
				P_{t+3}^{(t+1)} & = P_{t+3}^{(t)} - b_{t+3}^{(t)} P_{t+1}^{(t)}, \\
										& \;\; \vdots \\
				P_n^{(t+1)} & = P_n^{(t)} - b_n^{(t)} P_{t+1}^{(t)}.
		    \end{aligned}
		\end{displaymath}
		Suppose that there exist $c_1, c_2, \dots, c_n$ satisfying
		\begin{displaymath}
		    c_1 P_1^{(t+1)} + c_2 P_2^{(t+1)} + \dots + c_n P_n^{(t+1)}=0,
		\end{displaymath}
		then
		\begin{displaymath}
			\begin{array}{l}
			    c_1 P_1^{(t)} + c_2 P_2^{(t)} + \dots + c_{t+1} P_{t+1}^{(t)}  \\
			     \phantom{M} + c_{t+2} (P_{t+2}^{(t)} - b_{t+2}^{(t)} P_{t+1}^{(t)}) + c_{t+3} (P_{t+3}^{(t)} - b_{t+3}^{(t)} P_{t+1}^{(t)}) + \dots + c_n (P_{n}^{(t)} - b_{n}^{(t)} P_{t+1}^{(t)}) =0,
			\end{array}		
		\end{displaymath}
		so	
		\begin{displaymath}
			\begin{array}{l}
			     c_1 P_1^{(t)} + c_2 P_2^{(t)} + \dots + c_{t} P_{t}^{(t)} \\
			      \phantom{M} + (c_{t+1} - c_{t+2} b_{t+2}^{(t)} - c_{t+3} b_{t+3}^{(t)} - \dots - c_{n} b_{n}^{(t)}) P_{t+1}^{(t)} \\
					\phantom{M}  + c_{t+2}P_{t+2}^{(t)} + c_{t+3}P_{t+3}^{(t)} + \dots + c_{n}P_{n}^{(t)} =0.
			\end{array}	
		\end{displaymath}
		According to the induction condition, $P_1^{(t)}, P_2^{(t)}, \dots, P_n^{(t)}$ are linearly independent, we arrive at $c_1 = c_2 = \dots = c_n =0$, thus $P_1^{(t+1)}, P_2^{(t+1)}, \dots, P_n^{(t+1)}$ are also linearly independent.

		If $k=t+1$, by \cref{eqn:thm1:proof:3}
		\begin{displaymath}
		    P_j^{(t+1)} = \left\{
			 \begin{array}{l}
			      P_j^{(t)}, j=1, 2, \dots, t+1, \\
			      P_j^{(t)} - b_j^{(t)}P_{t+1}^{(t)}, j=t+2, t+3, \dots, n.\\
			 \end{array}
			\right.
		\end{displaymath}
		It means that $P_j^{(t+1)}$'s are represented by $P_j^{(t)}, j=1, 2, \dots, t$, and $P_j^{(t+1)}$'s are represented by $P_j^{(t)}$ and $P_{t+1}^{(t)}$, $j=t+2, t+3, \dots, n$. Then we have
		\begin{equation} \abovedisplayskip=2pt \belowdisplayskip=2pt
			\label{eqn:thm1:proof:4}
		    \Lambda\{P_{i+1}^{(t+1)}, P_{i+2}^{(t+1)}, \dots, P_{n}^{(t+1)}\} \subseteq \Lambda\{P_{i+1}^{(t)}, P_{i+2}^{(t)}, \dots, P_{n}^{(t)}\}, i=1, 2, \dots, t.
		\end{equation}

		Due to the induction condition, $\lm(P^{(t)}_i) \not\in \Lambda\{P_j^{(t)}\}, j=i+1, \dots, n, i=1, 2, \dots, t$, so we have $\lm(P^{(t)}_i) \not\in \Lambda\{P_{i+1}^{(t)}, P_{i+2}^{(t)}, \dots, P_{n}^{(t)}\}, i=1,2, \dots, t$.

		From \cref{eqn:thm1:proof:4}, we get
		\begin{displaymath}
		    \lm(P^{(t)}_i) \not\in \Lambda\{P_{i+1}^{(t+1)}, P_{i+2}^{(t+1)}, \dots, P_{n}^{(t+1)}\}, i=1, 2, \dots, t,
		\end{displaymath}
		thus
		\begin{displaymath}
		     \lm(P^{(t)}_i)\not\in \Lambda\{P_{j}^{(t+1)}\}, j=i+1, \dots, n, i=1, 2, \dots, t.
		\end{displaymath}

		According to Line 8 in the algorithm, $P^{(t+1)}_i = P^{(t)}_i, i=1, 2, \dots, t$, so we get
		\begin{equation} \abovedisplayskip=2pt \belowdisplayskip=2pt
			\label{eqn:thm1:proof:5}
		    \lm(P^{(t+1)}_i)\not\in \Lambda\{P_{j}^{(t+1)}\}, j=i+1, \dots, n, i=1, 2, \dots, t.
		\end{equation}

		From \cref{eqn:thm1:proof:2}, $\lm(P^{(t)}_{t+1})\not\in \Lambda\{P_j^{(t+1)}\}, j=t+2, \dots, n$. According to Line 8 in the algorithm, $P_{t+1}^{(t+1)} = P_{t+1}^{(t)}$, then we have
		\begin{equation} \abovedisplayskip=2pt \belowdisplayskip=2pt
		    \label{eqn:thm1:proof:6}
			\lm(P^{(t+1)}_{t+1})\not\in \Lambda\{P_j^{(t+1)}\}, j=t+2, \dots, n.
		\end{equation}

		Due to \cref{eqn:thm1:proof:5}, \cref{eqn:thm1:proof:6}, we get
		\begin{displaymath}
		    \lm(P^{(t+1)}_{i})\not\in \Lambda\{P_j^{(t+1)}\}, j=i+1, \dots, n, i=1, 2, \dots, t+1.
		\end{displaymath}
		Hence the conclusions hold.
		
		It means $P_1^{(n-1)}, P_2^{(n-1)}, \dots, P_n^{(n-1)}$ are linearly independent and
		\begin{displaymath}
		     \lm(P^{(n-1)}_{i})\not\in \Lambda\{P_j^{(n-1)}\}, 1 \le i < j \le n,
		\end{displaymath}
		i.e., $\{P_1^{(n-1)}, P_2^{(n-1)}, \dots, P_n^{(n-1)}\}$ is a ``reverse" reduced basis.
	\end{proof}

	\begin{example}
	    Fixing the monomial ordering $\grlex(y \prec x)$. Let
		$ \{P_1, P_2, P_3, P_4\} = \{1, x, x^2+2y, \frac{1}{6} x^3 + xy + y\} \subset \F[x, y],
		$
		they are linearly independent.
		By \cref{alg:revredbasis}, we get
		\begin{displaymath}
		     \{P_1, P_2, P_3, P_4-\frac{1}{2}P_3\} = \{1, x, x^2+2y, \frac{1}{6} x^3 - \frac{1}{2}x^2 + xy\}
		\end{displaymath}
		is a ``reverse" reduced basis.
	\end{example}

	\begin{remark}
	    Actually, the algorithm is also workable for formal power series.
	\end{remark}

 \section{The interpolation problem at point zero} \label{sec:zeropoint}
	
	\begin{definition}
	Let $T$ and $T'$ be two sets of monomials in $\F[{\bf X}]$ with $T-T'\neq \emptyset$ and $T'-T\neq \emptyset$. For a monomial ordering $\prec$, we call $T'\prec T$, if
	\begin{displaymath} \abovedisplayskip=2pt \belowdisplayskip=2pt
	    \max_{\prec} (T'-T)\prec \max_{\prec} (T-T').
	\end{displaymath}
	\end{definition}

	    For example, let $ T=\{1,y,xy,x^2\}$, $ T'=\{1,x,y^2,x^2\}$. Then  $T'-T=\{x,y^2\}$,  $T-T'=\{y,xy\}$. For the monomial ordering $\grlex(y \prec x)$, we have
		\begin{displaymath} \abovedisplayskip=2pt \belowdisplayskip=2pt
		    \max_{\prec} (T'-T)=y^2\prec xy=\max_{\prec} (T-T'),
		\end{displaymath}
		and it means $T'\prec T$.

\begin{definition}[$\prec$-minimal monomial interpolating basis]
	    Given interpolation conditions $\Delta$ and a monomial ordering $\prec$, let $T$ be a monomial interpolating basis for $\Delta$. Then $T$ is $\prec$-minimal if there exists no monomial interpolating bassis $T'$ for $\Delta$ satisfying $T' \prec T$.
	\end{definition}
	\begin{theorem}[Existence of a monomial interpolating basis \cite{SauerT1998, IdealVarietyAlgorithm2007}] \rm Given interpolation conditions \label{thm:MonBasExist}
	$
	    \Delta :=\delta_{\bf 0} \circ \{P_1(D), P_2(D), \dots, P_n(D)\}
	$
	and a monomial ordering $\prec$, there exists a unique $\prec$-minimal monomial interpolating basis for $\Delta$.
	\end{theorem}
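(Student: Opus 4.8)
The plan is to reduce the statement to an elementary fact: a nonempty finite set that is totally ordered by $\prec$ has a unique minimum. The bulk of the work is to set up the right finite set and to check that $\prec$ really is a total order on it.

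First I would translate the conditions into linear algebra. Writing $\lambda_i := \delta_{\bf 0}\circ P_i(D)$ and using that $D^{\beta}{\bf X}^{\alpha}\big|_{\bf 0}$ equals $\alpha!$ when $\beta=\alpha$ and $0$ otherwise, one gets $\lambda_i({\bf X}^{\alpha}) = \alpha!\,\hat P_i(\alpha)$. Hence a finite monomial set $T = \{{\bf X}^{\alpha_1},\dots,{\bf X}^{\alpha_m}\}$ is a monomial interpolating basis for $\Delta$ exactly when the matrix $\big(\lambda_i({\bf X}^{\alpha_j})\big)$ is square and invertible; since the factors $\alpha_j!$ are nonzero column scalings, this is equivalent to invertibility of the coefficient matrix $C_T := \big(\hat P_i(\alpha_j)\big)$. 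In particular every interpolating basis has exactly $m=n$ elements, and a monomial ${\bf X}^{\alpha}$ can occur in a basis only if $\hat P_i(\alpha)\neq 0$ for some $i$, i.e.\ only if ${\bf X}^{\alpha}\in\Lambda\{P_1,\dots,P_n\}$.

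Two consequences follow immediately. Since all candidate monomials lie in the finite set $\Lambda\{P_1,\dots,P_n\}$, there are only finitely many monomial interpolating bases. And since the $P_i$ are linearly independent (so that $\Delta$ consists of $n$ independent conditions), the full coefficient matrix with rows $P_i$ and columns ranging over $\Lambda\{P_1,\dots,P_n\}$ has rank $n$; selecting $n$ independent columns produces an invertible $C_T$, so at least one monomial interpolating basis exists and the set of them is nonempty.

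It then remains to show that $\prec$ restricts to a strict total order on this finite nonempty set of $n$-element bases. For distinct $T,T'$, equal cardinality forces $T-T'$ and $T'-T$ to be both nonempty, and they are disjoint, so $\max_{\prec}(T-T')$ and $\max_{\prec}(T'-T)$ are distinct comparable monomials; this yields trichotomy. For transitivity I would identify $\prec$ with the lexicographic comparison of the two bases after sorting each decreasingly under $\prec$: if $m=\max_{\prec}(T\bigtriangleup T')$ lies in $T$, then $T$ and $T'$ agree on all monomials larger than $m$, so the sorted tuples coincide at their top and $T$ carries the larger entry $m$ at the first differing position. Lexicographic order on tuples of fixed length over the $\prec$-ordered monomials is a total order, hence so is $\prec$ on bases, and its unique minimum over the finite nonempty set is the desired $\prec$-minimal basis, yielding both existence and uniqueness.

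I expect the delicate step to be the verification that $\prec$ is genuinely an order on bases — in particular transitivity, which is not visible from the two-set definition and needs the sorted-tuple reformulation above. Existence and finiteness, by contrast, are short once the matrix dictionary and the containment in $\Lambda\{P_1,\dots,P_n\}$ are in place.
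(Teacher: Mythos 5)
The paper offers no proof of \cref{thm:MonBasExist} at all---it is imported from the literature \cite{SauerT1998, IdealVarietyAlgorithm2007}---so there is no internal argument to compare yours against; what you have written is a self-contained proof, and it is correct. Two remarks on how it sits relative to the paper. First, your ``matrix dictionary'' is precisely the content of \cref{lem:InterpBasNonSing} and \cref{lem:MonBasSubsetInterpMon}, which the paper proves later for other purposes: invertibility of $T_\Delta$ characterizes monomial interpolating bases (so every basis has exactly $n$ elements), and membership in $\Lambda\{P_1,\dots,P_n\}$ is forced because any other monomial gives a zero column. Your existence step---the rows of the coefficient matrix over $\Lambda\{P_1,\dots,P_n\}$ are linearly independent, hence some $n$ columns are independent---is sound, under the implicit but necessary hypothesis that $P_1,\dots,P_n$ are linearly independent (the paper assumes this tacitly as well; without it no basis exists and the statement would be vacuously false). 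Second, the genuinely novel part of your write-up, and the part the paper nowhere addresses, is the verification that the relation $T'\prec T$ on $n$-element monomial sets is a strict total order: trichotomy from disjointness of the two set differences, and transitivity via the identification with lexicographic comparison of decreasingly sorted $n$-tuples (the sets agree above $\max_\prec(T\bigtriangleup T')$, and whichever set contains that monomial is lexicographically larger). That identification is correct and is indeed the delicate point; once it is in place, existence and uniqueness of the minimum of a finite nonempty totally ordered set is immediate. For contrast, the cited sources obtain the result constructively, essentially exhibiting the $\prec$-minimal basis as the $\prec$-standard monomials of the ideal $\ker\Delta$ via Gr\"obner-basis or MMM-type machinery; your route is more elementary and avoids that apparatus, at the cost of not producing the basis explicitly, which is what the paper's own \cref{thm:maintheorem} later supplies.
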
	

	\begin{lemma} \label{lem:InterpBasNonSing}
	     Given interpolation conditions $\Delta :=\delta_{\bf 0} \circ \{P_1(D), P_2(D), \dots, P_n(D)\}$  and a set of monomials $T:=\{{\bf X}^{\beta_1}, {\bf X}^{\beta_2}, \dots, {\bf X}^{\beta_n}\}$, the matrix applying $T$ on $\Delta$ is denoted by
		\begin{displaymath}
		    T_{\Delta} :=(\delta_{\bf 0} \circ P_i(D) {\bf X}^{\beta_j})_{ij}, \quad 1 \le i, j \le n.
		\end{displaymath}
		Then $T$ is a monomial interpolating basis iff $T_{\Delta}$ is non-singular. 		
	\end{lemma}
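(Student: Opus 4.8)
The plan is to recognize that this lemma is, at bottom, the coordinate form of a well-posedness statement for a square linear system, and to make the translation between the abstract notion of an interpolating basis and the matrix $T_\Delta$ explicit. First I would fix notation: the $n$ monomials in $T=\{{\bf X}^{\beta_1}, \dots, {\bf X}^{\beta_n}\}$ are distinct, hence linearly independent, so they span an $n$-dimensional subspace $\mathcal{P}_T := \mathrm{span}_{\F}\,T \subset \F[{\bf X}]$. On the other side, the interpolation conditions $\Delta$ consist of the $n$ linear functionals $L_i := \delta_{\bf 0} \circ P_i(D)$ acting on $\F[{\bf X}]$. By the meaning of a monomial interpolating basis, $T$ is such a basis for $\Delta$ exactly when the restricted evaluation map
\begin{displaymath}
    \mathcal{E}\colon \mathcal{P}_T \longrightarrow \F^{n}, \qquad g \longmapsto \bigl(L_1(g), L_2(g), \dots, L_n(g)\bigr),
\end{displaymath}
is a bijection, i.e.\ every prescribed data vector in $\F^n$ is matched by one and only one interpolant drawn from $\mathcal{P}_T$.

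Next I would put $\mathcal{E}$ in coordinates. Writing a generic element of $\mathcal{P}_T$ as $g=\sum_{j=1}^n c_j {\bf X}^{\beta_j}$ and using the linearity of each functional $L_i$, one obtains
\begin{displaymath}
    L_i(g) = \sum_{j=1}^{n} c_j\, L_i({\bf X}^{\beta_j}) = \sum_{j=1}^{n} c_j\, \bigl(\delta_{\bf 0} \circ P_i(D)\,{\bf X}^{\beta_j}\bigr), \qquad 1 \le i \le n.
\end{displaymath}
Thus, with respect to the monomial basis $\{{\bf X}^{\beta_1}, \dots, {\bf X}^{\beta_n}\}$ of $\mathcal{P}_T$ and the standard basis of $\F^n$, the matrix representing $\mathcal{E}$ is precisely $T_\Delta=(\delta_{\bf 0}\circ P_i(D){\bf X}^{\beta_j})_{ij}$. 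Solving the interpolation problem for data $\mathbf{f}\in\F^n$ is therefore identical to solving the square linear system $T_\Delta\,\mathbf{c}=\mathbf{f}$ for the coefficient vector $\mathbf{c}=(c_1,\dots,c_n)^{\mathsf T}$.

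Finally I would invoke the standard fact that a square linear map is a bijection if and only if its matrix is non-singular: the system $T_\Delta\,\mathbf{c}=\mathbf{f}$ has a unique solution for every right-hand side $\mathbf{f}$ iff $\det T_\Delta \neq 0$. Combining this with the equivalence established above, $T$ is a monomial interpolating basis $\iff$ $\mathcal{E}$ is a bijection $\iff$ $T_\Delta$ is non-singular, which is the claim. I expect the only genuine point requiring care, rather than routine calculation, to be the bookkeeping at the start: making sure the definition of "interpolating basis" is matched to the evaluation map $\mathcal{E}$ and confirming that the count of monomials equals the count of conditions, so that $T_\Delta$ is square. Once the problem is set up as a square system, the biconditional is immediate from elementary linear algebra.
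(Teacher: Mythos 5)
Your proposal is correct and follows essentially the same route as the paper's own proof: both reduce the definition of a monomial interpolating basis to unique solvability of the square linear system $T_\Delta\,\mathbf{c}=\mathbf{f}$ for arbitrary data $\mathbf{f}$, and then invoke the standard equivalence between unique solvability and non-singularity. Your version merely phrases this more formally via the evaluation map $\mathcal{E}$ and its matrix representation, which is a cleaner write-up of the identical argument.
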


	\begin{proof} \rm
	    Suppose that the interpolating polynomial $g=\sum_{j=1}^n c_j {\bf X}^{\beta_j}$ and the values are $f_i$'s, $i=1, 2, \dots, n$. It means
		\begin{displaymath}
		    (\delta_{\bf 0} \circ P_i(D)) g = f_i, i=1, 2, \dots, n. 
		\end{displaymath}
		Thus we get the linear equations
		\begin{displaymath}
		  \begin{blockarray}{ccccc} 
		         & {\bf X}^{\beta_1}  & {\bf X}^{\beta_2} & \cdots & {\bf X}^{\beta_n}  \\
		      \begin{block}{r(cccc)}
		          \delta_{\bf 0} \circ P_1(D) & \delta_{\bf 0} \circ P_1(D) {\bf X}^{\beta_1} & \delta_{\bf 0} \circ P_1(D) {\bf X}^{\beta_2} &  \cdots & \delta_{\bf 0} \circ P_1(D) {\bf X}^{\beta_n}\\
				  \delta_{\bf 0} \circ P_2(D) & \delta_{\bf 0} \circ P_2(D) {\bf X}^{\beta_1} & \delta_{\bf 0} \circ P_2(D) {\bf X}^{\beta_2} &  \cdots & \delta_{\bf 0} \circ P_2(D) {\bf X}^{\beta_n}\\
				 \vdots\phantom{abcd} & \vdots  & \vdots   & \vdots & \vdots  \\
				 \delta_{\bf 0} \circ P_n(D) & \delta_{\bf 0} \circ P_n(D) {\bf X}^{\beta_1} & \delta_{\bf 0} \circ P_n(D) {\bf X}^{\beta_2} &  \cdots & \delta_{\bf 0} \circ P_n(D) {\bf X}^{\beta_n}\\
		      \end{block}
		  \end{blockarray}\begin{pmatrix}
		      c_1 \\ c_2 \\ \vdots \\ c_n\\
		    \end{pmatrix} =\begin{pmatrix}
		      f_1 \\ f_2 \\ \vdots \\ f_n\\
		    \end{pmatrix}.
		\end{displaymath}
		So the coefficient matrix $T_{\Delta}$ is non-singular $\Leftrightarrow$ The linear equations has a unique solution.
	\end{proof}

	\begin{example} \rm
	  Given interpolation conditions  $\Delta=\delta_{\bf 0} \circ \{1, D_y + D_z, D_x\}$ and a  set of monomials $T= \{1, z, x\}$, then
		\begin{displaymath}
		    T_\Delta =
			\begin{blockarray}{cccl} 
			       1 & z &x  &  \\
			    \begin{block}{(ccc)l}
			         1& 0 & 0 & \delta_{\bf 0} \circ 1 \\
					 0 & 1 & 0 & \delta_{\bf 0} \circ (D_y + D_z) \\
					 0 & 0 & 1 & \delta_{\bf 0} \circ D_x \\
			    \end{block}
			\end{blockarray}.
		\end{displaymath}
		It is obvious that $T_{\Delta}$ is non-singular, so $T$ is a monomial interpolating basis for $\Delta$.
	\end{example}
	
	\begin{lemma} \label{lem:MonBasSubsetInterpMon}
	    Given interpolation conditions $\Delta = \delta_{\bf 0} \circ \{P_1(D), P_2(D), \dots, P_n(D)\}$, let $T:=\{{\bf X}^{\beta_1}, {\bf X}^{\beta_2}, \dots, {\bf X}^{\beta_n}\}$ be a monomial interpolating basis for $\Delta$, then $T \subseteq \Lambda\{P_1, P_2, \dots, P_n\}$.
	\end{lemma}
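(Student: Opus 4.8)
The plan is to make \cref{lem:InterpBasNonSing} do all the work: since $T$ is a monomial interpolating basis, the matrix $T_\Delta = (\delta_{\bf 0}\circ P_i(D){\bf X}^{\beta_j})_{ij}$ is non-singular, and I want to extract from this that each ${\bf X}^{\beta_j}$ must appear in some $P_i$. The route is to compute the entries of $T_\Delta$ explicitly and then read off the conclusion from the fact that a non-singular matrix has no zero column.

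First I would evaluate $\delta_{\bf 0}\circ P_i(D){\bf X}^{\beta_j}$. Expanding $P_i = \sum_\alpha \hat{P}_i(\alpha){\bf X}^\alpha$ gives $P_i(D){\bf X}^{\beta_j} = \sum_\alpha \hat{P}_i(\alpha)\,D^\alpha {\bf X}^{\beta_j}$. The elementary rule for differentiating a monomial shows that $D^\alpha {\bf X}^{\beta_j}$ is a nonzero constant multiple of ${\bf X}^{\beta_j-\alpha}$ when $\alpha\le\beta_j$ componentwise, and is $0$ otherwise. Applying $\delta_{\bf 0}$ annihilates every term of positive degree, so only the term with $\alpha=\beta_j$ survives, and that term is the constant $\beta_j!$. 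Hence
\[
\delta_{\bf 0}\circ P_i(D){\bf X}^{\beta_j} = \hat{P}_i(\beta_j)\,\beta_j!,
\]
so the $(i,j)$ entry of $T_\Delta$ is precisely $\hat{P}_i(\beta_j)\,\beta_j!$.

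Next, because $T_\Delta$ is non-singular, none of its columns can vanish. Fix $j$; since $\beta_j!\neq 0$, the $j$-th column $(\hat{P}_i(\beta_j)\,\beta_j!)_i$ being nonzero forces $\hat{P}_i(\beta_j)\neq 0$ for at least one index $i$. By the definition of $\Lambda$, this is exactly the statement that ${\bf X}^{\beta_j}$ occurs with nonzero coefficient in $P_i$, i.e.\ ${\bf X}^{\beta_j}\in\Lambda\{P_1,\dots,P_n\}$. Letting $j$ range over $1,\dots,n$ yields $T\subseteq\Lambda\{P_1,\dots,P_n\}$. The only place requiring care is the entry computation in the first step; once the formula $\hat{P}_i(\beta_j)\,\beta_j!$ is established, the conclusion is an immediate consequence of the non-vanishing of the columns of a non-singular matrix, so I expect no substantive obstacle beyond this bookkeeping.
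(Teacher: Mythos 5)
Your proof is correct and is essentially the paper's own argument: the paper also invokes \cref{lem:InterpBasNonSing} and the computation $\delta_{\bf 0}\circ P_i(D){\bf X}^{\beta} = \hat{P}_i(\beta)\,\beta!$, concluding that a missing monomial would produce a zero column in $T_\Delta$. The only difference is presentational --- the paper argues by contradiction while you argue directly via the contrapositive (non-singular $\Rightarrow$ no zero column) --- which is not a substantive distinction.
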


	\begin{proof} \rm
	    We will prove this by contradiction.  Suppose that there exists ${\bf X}^{\beta} \in T$ with ${\bf X}^{\beta} \not \in \Lambda\{P_1, P_2, \dots, P_n\}$. From
		\begin{displaymath}
		    \delta_{\bf 0} \circ D^{\alpha} {\bf X}^{\beta} =
			\begin{cases}
			   \beta!, & \alpha=\beta, \\
			   0,   & \alpha \neq \beta, \\
			\end{cases}
		\end{displaymath}
		we have $[\delta_{\bf 0} \circ P_i(D)] {\bf X}^{\beta} = [\delta_{\bf 0} \circ \sum \hat{P_i}(\alpha) D^{\alpha}] {\bf X}^{\beta} =\sum \hat{P_i}(\alpha) \underbrace{(\delta_{\bf 0} \circ D^{\alpha} {\bf X}^{\beta})}_{0} = 0, 1 \le i \le n$.  It means the matrix $T_{\Delta}$ has a zero column, so $T_{\Delta}$ is singular. It contradicts with \cref{lem:InterpBasNonSing}.
	\end{proof}

	\begin{remark} \rm
	    \cref{lem:MonBasSubsetInterpMon} implies that the monomial interpolating basis must be selected from the set of monomials appearing in the interpolation conditions.
	\end{remark}
	
	\begin{lemma} \label{lem:InterpCondHasMon}
	    Given interpolation conditions $\Delta = \delta_{\bf 0} \{P_1(D), P_2(D), \dots, P_n(D)\}$, let $T=\{{\bf X}^{\beta_1}, {\bf X}^{\beta_2}, \dots, {\bf X}^{\beta_n}\}$ be a monomial interpolating basis for $\Delta$. Then for each $ P_i, 1 \le i \le n$, there exists ${\bf X}^{\alpha_i} \in \Lambda\{P_i\}$ satisfying ${\bf X}^{\alpha_i} \in T$.
	\end{lemma}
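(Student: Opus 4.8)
The plan is to argue by contradiction, mirroring the column-based argument used in the proof of \cref{lem:MonBasSubsetInterpMon}, but now reading the non-singularity condition along the \emph{rows} of the matrix $T_{\Delta}$ rather than its columns. The central ingredient is again the explicit formula for the entries of $T_{\Delta}$, obtained from the action of differential operators on monomials at the origin.

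First I would record the entry formula. Writing $P_i = \sum_{\alpha} \hat{P}_i(\alpha) {\bf X}^{\alpha}$ and using the identity
\begin{displaymath}
\delta_{\bf 0} \circ D^{\alpha} {\bf X}^{\beta_j} = \begin{cases} \beta_j!, & \alpha = \beta_j, \\ 0, & \alpha \neq \beta_j, \end{cases}
\end{displaymath}
one finds that the $(i,j)$ entry of $T_{\Delta}$ equals $\delta_{\bf 0} \circ P_i(D) {\bf X}^{\beta_j} = \hat{P}_i(\beta_j)\, \beta_j!$. In particular, this entry is nonzero if and only if ${\bf X}^{\beta_j} \in \Lambda\{P_i\}$.

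Next, suppose toward a contradiction that some $P_i$ shares no monomial with $T$; that is, ${\bf X}^{\beta_j} \notin \Lambda\{P_i\}$ for every $j = 1, 2, \dots, n$. Then $\hat{P}_i(\beta_j) = 0$ for all $j$, so by the entry formula the entire $i$-th row of $T_{\Delta}$ vanishes. A matrix with a zero row is singular, contradicting \cref{lem:InterpBasNonSing}, since $T$ being a monomial interpolating basis forces $T_{\Delta}$ to be non-singular. Hence each $P_i$ must contain at least one monomial ${\bf X}^{\alpha_i}$ belonging to $T$.

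I do not expect any genuine obstacle here: the argument is the exact dual of \cref{lem:MonBasSubsetInterpMon}, which ruled out zero columns, whereas here we rule out zero rows. The only point meriting a moment's care is the translation between the combinatorial statement ``$P_i$ contains no monomial of $T$'' and the linear-algebraic statement ``the $i$-th row of $T_{\Delta}$ is zero,'' and this equivalence is immediate from the entry formula above.
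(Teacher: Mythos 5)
Your proof is correct and is essentially identical to the paper's own argument: the paper also assumes (WLOG, for $P_1$) that no monomial of $P_1$ lies in $T$, computes $[\delta_{\bf 0} \circ P_1(D)]\,{\bf X}^{\beta_j} = 0$ for all $j$ from the same entry formula, and concludes that $T_{\Delta}$ has a zero row, contradicting \cref{lem:InterpBasNonSing}. Your explicit statement of the entry value $\hat{P}_i(\beta_j)\,\beta_j!$ is a slightly cleaner packaging of the same computation.
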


	\begin{proof}
	    We will prove this by contradiction. Without loss of generality, we can assume that for every ${\bf X}^{\alpha} \in \Lambda\{P_1\}$, ${\bf X}^{\alpha} \not\in T$. Then it is observed that
		\begin{displaymath}
		    \begin{aligned} \abovedisplayskip=2pt \belowdisplayskip=2pt
		        [\delta_{\bf 0} \circ P_1(D)] {\bf X}^{\beta_j} & = [\delta_{\bf 0} \circ \sum \hat{P_1}(\alpha) D^{\alpha}] {\bf X}^{\beta_j} \\
		           &= \sum \hat{P_1}(\alpha) \underbrace{(\delta_{\bf 0} \circ D^{\alpha} {\bf X}^{\beta_j})}_{0} \\
				   &= 0, \quad 1 \le j \le n. 
		    \end{aligned}
		\end{displaymath}
So the matrix $T_{\Delta}$ has a zero row, and it is singuar. It contradicts with \cref{lem:InterpBasNonSing}.
	\end{proof}

	\begin{remark}
	    Given interpolation conditions $\Delta = \delta_{\bf 0} \circ \{P_1(D), P_2(D), \dots, P_n(D)\}$, \cref{lem:InterpCondHasMon} shows that each  $P_i$ contains at least a monomial in the monomial interpolating basis.
    \end{remark}

	\begin{theorem} \label{thm:maintheorem}
	    Given interpolation conditions $\Delta = \delta_{\bf 0} \circ \{P_1(D)$, $P_2(D), \dots, P_n(D)\}$,  if $\{P_1, P_2, \dots, P_n\}$ is a ``reverse" reduced basis w.r.t. a monomial ordering $\prec$, then $\{\lm(P_1), \lm(P_2), \dots, \lm(P_n)\}$ is the $\prec$-minimal monomial interpolating basis.
	\end{theorem}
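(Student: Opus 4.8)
The plan is to prove the two halves of the statement separately: that $T:=\{\lm(P_1),\dots,\lm(P_n)\}$ is a monomial interpolating basis at all, and that among all such bases it is $\prec$-minimal. Throughout I would write $\mathbf X^{\beta_i}=\lm(P_i)$ and set $M:=\Lambda\{P_1,\dots,P_n\}$. The first move is a reformulation: for any $n$-set $S=\{\mathbf X^{\gamma_1},\dots,\mathbf X^{\gamma_n}\}$ the identity $\delta_{\mathbf 0}\circ P_i(D)\,\mathbf X^{\gamma_j}=\hat P_i(\gamma_j)\,\gamma_j!$ (used already in \cref{lem:MonBasSubsetInterpMon}) shows that $S_\Delta$ is the coefficient submatrix $\big(\hat P_i(\gamma_j)\big)_{ij}$ right-multiplied by the invertible matrix $\diag(\gamma_j!)$. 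Hence by \cref{lem:InterpBasNonSing} and \cref{lem:MonBasSubsetInterpMon} the monomial interpolating bases are exactly the $n$-subsets $S\subseteq M$ with $\det\big(\hat P_i(\gamma_j)\big)_{ij}\neq 0$.

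With this in hand I would first verify that $T$ qualifies. Taking the columns in the order $\mathbf X^{\beta_1},\dots,\mathbf X^{\beta_n}$, the ``reverse'' reduced property gives $\hat P_j(\beta_i)=0$ whenever $i<j$, whereas $\hat P_i(\beta_i)\neq 0$ since it is the coefficient of a least monomial. Thus $\big(\hat P_i(\beta_j)\big)_{ij}$ is triangular with nonzero diagonal, so $T_\Delta$ is non-singular and $T$ is a monomial interpolating basis. The same property shows the $\beta_i$ are pairwise distinct, for $\lm(P_i)=\lm(P_j)$ with $i<j$ would put $\lm(P_i)\in\Lambda\{P_j\}$.

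The heart of the argument is minimality, and here the key idea is to extract a \emph{dominating bijection} from the determinant. Let $T'=\{\mathbf X^{\gamma_1},\dots,\mathbf X^{\gamma_n}\}$ be any monomial interpolating basis. Since $\det\big(\hat P_i(\gamma_j)\big)_{ij}\neq 0$, the Leibniz expansion has a nonvanishing term, i.e.\ a permutation $\sigma$ with $\prod_i\hat P_i(\gamma_{\sigma(i)})\neq 0$. Each factor being nonzero means $\mathbf X^{\gamma_{\sigma(i)}}\in\Lambda\{P_i\}$, so $\psi(\mathbf X^{\beta_i}):=\mathbf X^{\gamma_{\sigma(i)}}$ defines a bijection $\psi\colon T\to T'$ with $\mathbf X^{\beta_i}\preceq\psi(\mathbf X^{\beta_i})$ for every $i$, because $\mathbf X^{\beta_i}=\lm(P_i)$ is the least monomial occurring in $P_i$. (This is a refinement of \cref{lem:InterpCondHasMon}, which only asserts that each $P_i$ contributes one basis monomial.)

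It then remains to convert this dominating bijection into the set inequality $T\preceq T'$. I would argue by sorting: writing $T=\{a_1\prec\cdots\prec a_n\}$ and $T'=\{b_1\prec\cdots\prec b_n\}$, the elements $\psi(a_i),\dots,\psi(a_n)$ are $n-i+1$ distinct members of $T'$ each $\succeq a_i$, forcing $b_i\succeq a_i$ for all $i$; element-wise domination of the sorted sequences then puts $\max_\prec(T\triangle T')$ into $T'$, which is precisely $T\preceq T'$. Consequently no interpolating basis $T'$ satisfies $T'\prec T$, so $T$ is $\prec$-minimal, and the uniqueness in \cref{thm:MonBasExist} identifies it as \emph{the} $\prec$-minimal monomial interpolating basis. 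I expect this last combinatorial passage---producing the dominating bijection and reconciling it with the max-of-symmetric-difference ordering that defines $\prec$-minimality---to be the main obstacle, since it is the one place where the clean determinantal reformulation must be matched against the set ordering; the reduction to a determinant and the triangularity of $T_\Delta$ are by comparison routine.
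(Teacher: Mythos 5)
Your proposal is correct, and its second half goes genuinely beyond what the paper itself does. The first half --- factoring $T_\Delta$ as the coefficient matrix $(\hat P_i(\beta_j))_{ij}$ times $\diag(\beta_j!)$ and using the ``reverse'' reduced property to make that matrix triangular with nonzero diagonal --- is exactly the paper's argument for non-singularity (the paper also tacitly needs your side remark that the $\lm(P_i)$ are pairwise distinct). For minimality, however, the paper only invokes \cref{lem:InterpCondHasMon} (each $P_i$ must contribute at least one monomial to any interpolating basis) and then declares it ``obvious'' that picking the least monomial of each $P_i$ is minimal. That is a real gap: \cref{lem:InterpCondHasMon} by itself does not prevent several $P_i$ from sharing the same representative monomial of a competing basis $T'$, and even granted distinct representatives one must still reconcile term-wise domination with the symmetric-difference ordering by which $\prec$-minimality is defined. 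Your proof closes both holes: the Leibniz expansion of $\det\bigl(\hat P_i(\gamma_j)\bigr)\neq 0$ produces a permutation, hence a genuine bijection $\psi\colon T\to T'$ with $t\preceq\psi(t)$ (a system of distinct representatives, strictly stronger than \cref{lem:InterpCondHasMon}), and your sorting/counting step correctly converts this into $\max_\prec(T\,\triangle\,T')\in T'-T$, i.e.\ $T\preceq T'$ for \emph{every} monomial interpolating basis $T'$, so $T$ is in fact the minimum, consistent with the uniqueness asserted in \cref{thm:MonBasExist}. What the paper's version buys is brevity; what yours buys is an actual proof of the minimality half, which is the substantive content of the theorem.
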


	\begin{proof}
	    According to \cref{lem:InterpCondHasMon}, we must choose at least one monomial from $P_i$ to form the monomial interpolating basis. It is obvious that $\{\lm(P_1), \lm(P_2), \dots, \lm(P_n)\}$ we choose is of minimal degree. Thus we only need to prove $T=\{\lm(P_1), \lm(P_2), \dots, \lm(P_n)\}$ do construct a monomial basis, i.e., $T_\Delta$ is non-singular.

		Let $\lm(P_i) = {\bf X}^{\beta_i}$ and $P_i =\sum \hat{P_i}(\alpha) {\bf X}^{\alpha} + \hat{P_i}(\beta_i) {\bf X}^{\beta_i}, 1 \le i \le n$. Since $\{P_1, P_2, \dots, P_n\}$ is a ``reverse" reduced basis, it means
		\begin{displaymath}
		    \lm(P_i) \not\in \Lambda \{P_j\}, \quad 1 \le i < j \le n.
		\end{displaymath}
		Hence we have
		\begin{displaymath}
		    (\delta_{\bf 0} \circ P_j(D)) (\lm (P_i)) =
			\begin{cases}
			   0, & i<j,  \\
			    \beta_i ! \hat{P_j} (\beta_i)=\beta_i ! \hat{P_i} (\beta_i) \neq 0, & i=j,  \\
			\end{cases} \quad 1 \le i, j \le n. 
		\end{displaymath}
		So $T_{\Delta}$ is a upper-triangular matrix with diagonal elements nonzero, i.e., it is non-singular.
	\end{proof}

	\begin{example} \rm
	    Given interpolation conditions
		\begin{displaymath} \abovedisplayskip=2pt \belowdisplayskip=2pt
		    \delta_{\bf 0} \circ \{P_1(D), P_2(D), P_3(D)\}=\delta_{\bf 0} \circ \{1, D_y + D_z, D_x\},
		\end{displaymath}
		it is easy to see $\{P_1, P_2, P_3\}= \{1, y+z, x\}$ is a ``reverse" reduced basis w.r.t. $\grlex(z \prec y \prec x)$. Then by  \cref{thm:maintheorem} we know that $\{\lm(P_1), \lm(P_2), \lm(P_3)\} = \{1, z, x\}$  is the $\prec$-minimal monomial interpolating basis.
	\end{example}

	\begin{example}
	     Given interpolation conditions
		 \begin{displaymath} \abovedisplayskip=2pt \belowdisplayskip=2pt
		     \delta_{\bf 0} \circ \{1, D_x, \frac{1}{2} D_x^2 + D_y, \frac{1}{6} D_x^3 - \frac{1}{2} D_x^2 + D_x D_y\},
		 \end{displaymath}
		it is easy to see $\{1, x, \frac{1}{2}x^2 + y, \frac{1}{6} x^3 - \frac{1}{2} x^2 + xy\}$ is a ``reverse" reduced basis w.r.t. $\grlex(y \prec x)$.				Then by  \cref{thm:maintheorem} we know that $\{1, x, y, xy\}$ is the $\prec$-minimal monomial interpolating basis.
	\end{example}

	\section{The interpolation problem at several points} \label{sec:severalpoints}

	Let ${\bf X}=(x_1, x_2, \dots, x_d)$ and  $\F[[{\bf X}]]$ be the ring of formal power series .  For $\boldsymbol{\uptheta}=(\theta_1, \theta_2, \dots, \theta_d) \in \F^d$, we denote by $\boldsymbol{\uptheta} {\bf X} :=\sum_{i=1}^d \theta_i x_i$. From Taylor's formula, we have
	\begin{displaymath}
		\e^{\boldsymbol{\uptheta} {\bf X}} = \sum_{j=0}^\infty \frac{(\boldsymbol{\uptheta} {\bf X})^j}{j!}.
	\end{displaymath}
	Furthermore, since $\F[{\bf X}]$ is isomorphic to $\F[[{\bf X}]]$, by \cite{deboor1990on} we get
	\begin{equation} \label{eqn:nonzeroCons2zeroCons}
	    \delta_{\boldsymbol{\uptheta}} \circ \{P_1(D), P_2(D), \dots, P_n(D)\} =
		\delta_{\bf 0} \circ \{ \e^{\boldsymbol{\uptheta} D} P_1(D), \e^{\boldsymbol{\uptheta} D} P_2(D), \dots, \e^{\boldsymbol{\uptheta} D} P_n(D)\}.
	\end{equation}
	It means that an interpolation problem at a nonzero point can be converted into one at the zero point.

	\begin{theorem}[Main theorem] \label{thm:sevptsbasis}
	    For a monomial ordering $\prec$ and interpolation conditions
		\begin{displaymath}
		    \Delta=
			\left\{
				\begin{array}{l}
				    \delta_{\boldsymbol{\uptheta}_1} \circ \{P_{11}(D), P_{12}(D), \dots, P_{1s_1}(D)\} \\
				    \delta_{\boldsymbol{\uptheta}_2} \circ \{P_{21}(D), P_{22}(D), \dots, P_{2s_2}(D)\} \\
					\hspace*{2.5cm} \vdots\\
					\delta_{\boldsymbol{\uptheta}_m} \circ \{P_{m1}(D), P_{m2}(D), \dots, P_{ms_m}(D)\} \\
				\end{array}
			\right.
		\end{displaymath}
		where $\boldsymbol{\uptheta}_i \in \F^d, i=1, 2, \dots, m$, let $\{q_{ij} \;|\; i=1, 2, \dots, m, j=1, 2, \dots, s_i\}$ be a ``reverse" reduced basis of $\{ \e^{\boldsymbol{\uptheta}_i {\bf X}}P_{ij} \;|\; i=1, 2, \dots, m, j=1, 2, \dots, s_i\}$. Then $\{ \lm (q_{ij}) \;|\; i=1, 2, \dots, m, j=1, 2, \dots, s_i\}$ is the $\prec$-minimal monomial interpolating basis.
	\end{theorem}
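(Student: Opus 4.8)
The plan is to reduce the several-point problem to the single-point problem at the origin, which is already resolved by \cref{thm:maintheorem}, and then to verify that that theorem survives the passage from polynomials to formal power series. The argument rests on a preliminary observation: a set of interpolation conditions is intrinsically a \emph{linear space} of functionals, and whether a monomial set $T$ is an interpolating basis depends only on this space --- by \cref{lem:InterpBasNonSing} it is equivalent to the nonsingularity of $T_\Delta$, and replacing the functionals that present $\Delta$ by any other basis of the same space multiplies $T_\Delta$ on the left by an invertible matrix, which preserves nonsingularity. Consequently the notion of being $\prec$-minimal is also an invariant of the functional space alone.

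First I would apply the conversion formula \cref{eqn:nonzeroCons2zeroCons} to each block, obtaining
\begin{displaymath}
    \delta_{\boldsymbol{\uptheta}_i} \circ \{P_{i1}(D), \dots, P_{is_i}(D)\} = \delta_{\bf 0} \circ \{\e^{\boldsymbol{\uptheta}_i D} P_{i1}(D), \dots, \e^{\boldsymbol{\uptheta}_i D} P_{is_i}(D)\}.
\end{displaymath}
Collecting all blocks turns $\Delta$ into a single interpolation problem at the origin, $\delta_{\bf 0} \circ \{(\e^{\boldsymbol{\uptheta}_i {\bf X}} P_{ij})(D)\}$, whose inducing elements are exactly the formal power series $\{\e^{\boldsymbol{\uptheta}_i {\bf X}} P_{ij}\}$. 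For the problem to be well posed these must be linearly independent, which is precisely the hypothesis that permits us to form their ``reverse'' reduced basis.

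Next I would replace this inducing set by the ``reverse'' reduced basis $\{q_{ij}\}$ computed by \cref{alg:revredbasis}, which is legitimate on power series by the Remark following the algorithm. Since the algorithm only subtracts scalar multiples of earlier elements, $\{q_{ij}\}$ arises from $\{\e^{\boldsymbol{\uptheta}_i {\bf X}} P_{ij}\}$ through an invertible transformation and hence spans the same subspace of $\F[[{\bf X}]]$; applying the linear map $Q \mapsto \delta_{\bf 0} \circ Q(D)$ then shows that $\delta_{\bf 0} \circ \{q_{ij}(D)\}$ and $\Delta$ determine the identical space of functionals. By the opening observation they therefore possess the same monomial interpolating bases, and in particular the same $\prec$-minimal one.

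Finally I would invoke \cref{thm:maintheorem} on $\delta_{\bf 0} \circ \{q_{ij}(D)\}$ to conclude that $\{\lm(q_{ij})\}$ is the $\prec$-minimal monomial interpolating basis. The step requiring genuine care --- and the one I expect to be the main obstacle --- is that \cref{thm:maintheorem} together with \cref{lem:InterpBasNonSing,lem:MonBasSubsetInterpMon,lem:InterpCondHasMon} is stated for polynomials, whereas the $q_{ij}$ are honest power series. I would check that every computation these results rely on still makes sense: applying $\delta_{\bf 0} \circ q(D)$ to a \emph{fixed} monomial ${\bf X}^\beta$ kills all terms except $\alpha = \beta$, because $\delta_{\bf 0} \circ D^\alpha {\bf X}^\beta = \beta!$ when $\alpha = \beta$ and $0$ otherwise, so each matrix entry $(\delta_{\bf 0} \circ q_j(D)){\bf X}^{\beta_i} = \beta_i!\,\hat{q}_j(\beta_i)$ is a finite, well-defined number; and $\lm(q_{ij})$ is well defined because a monomial ordering is a well-ordering, so the nonempty support of each nonzero $q_{ij}$ has a least element. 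Once these points are settled, the reverse-reduced property $\lm(q_i) \notin \Lambda\{q_j\}$ for $i<j$ makes $T_\Delta$ triangular with nonzero diagonal exactly as in \cref{thm:maintheorem}, and the proof concludes.
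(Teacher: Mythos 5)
Your proposal is correct and takes essentially the same route as the paper: convert each block to conditions at the origin via \cref{eqn:nonzeroCons2zeroCons}, form the ``reverse'' reduced basis of $\{ \e^{\boldsymbol{\uptheta}_i {\bf X}}P_{ij}\}$, and then apply \cref{thm:maintheorem}. The only difference is that you make explicit two points the paper leaves implicit --- that replacing the inducing functionals by the ``reverse'' reduced basis preserves their span and hence the set of monomial interpolating bases, and that the zero-point results carry over to formal power series --- which tightens rather than changes the argument.
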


	\begin{proof} \rm
	    From \cref{eqn:nonzeroCons2zeroCons}, the interpolation conditions are equivalent to
		\begin{displaymath}
		    \Delta=
			\left\{
				\begin{array}{l}
				    \delta_{0} \circ \{\e^{\boldsymbol{\uptheta}_1D} P_{11}(D), \e^{\boldsymbol{\uptheta}_1D} P_{12}(D), \dots, \e^{\boldsymbol{\uptheta}_1D} P_{1s_1}(D)\} \\
				    \delta_{0} \circ \{\e^{\boldsymbol{\uptheta}_2 D} P_{21}(D), \e^{\boldsymbol{\uptheta}_2 D} P_{22}(D), \dots, \e^{\boldsymbol{\uptheta}_2 D} P_{2s_2}(D)\} \\
					\hspace*{2.5cm} \vdots\\
					\delta_{0} \circ \{\e^{\boldsymbol{\uptheta}_m D} P_{m1}(D), \e^{\boldsymbol{\uptheta}_m D} P_{m2}(D), \dots, \e^{\boldsymbol{\uptheta}_m D} P_{ms_m}(D)\} \\
				\end{array}
			\right.
		\end{displaymath}
		It is an interpolation problem at the zero point. Note that $P_{ij}(D), j=1, 2, \dots, s_i$, are linearly independent, $i=1, 2, \dots, m$, so $\e^{\boldsymbol{\uptheta}_i D} P_{ij}(D), j=1, 2, \dots, s_i, i=1, 2, \dots, m$, are also linearly independent. Using \cref{alg:revredbasis}, we can compute a ``reverse" reduced basis  $\{q_{ij} \;|\; j=1, 2, \dots, s_i, i=1, 2, \dots, m\}$ for $\{\e^{\boldsymbol{\uptheta}_i {\bf X}} P_{ij} \;|\; j=1, 2, \dots, s_i, i=1, 2, \dots, m\}$. Due to \cref{thm:maintheorem}, $\{ \lm (q_{ij}) \;|\; j=1, 2, \dots, s_i, i=1, 2, \dots, m \}$ is the $\prec$-minimal monomial interpolating basis.
	\end{proof}

	\begin{remark} \label{rem:finteterm}
	    Note that only the least monomial of each polynomial in a ``reverse" reduced basis is concerned, so in practical computation we only need the first finite terms of the Taylor's expansion of $\e^{\boldsymbol{\uptheta} {\bf X}}$.  For an interpolation problem with $n$ interpolation conditions, since the monomials in the $\prec$-minimal monomial interpolating basis form a lower set, we only need to compute the first finite terms of $\e^{\boldsymbol{\uptheta} {\bf X}}$ with degrees $\le n-1$ in the worst case.
	\end{remark}
	
    Next, we consider two bivariate examples.

	\begin{example}[Lagrange polynomial interpolation]
		
		\begin{figure}[htbp!] \setlength{\abovecaptionskip}{0pt} \setlength{\belowcaptionskip}{0pt}		
		 \centering
		 \subfigure[Interpolation points]{
		  \includegraphics[width=0.35\textwidth]{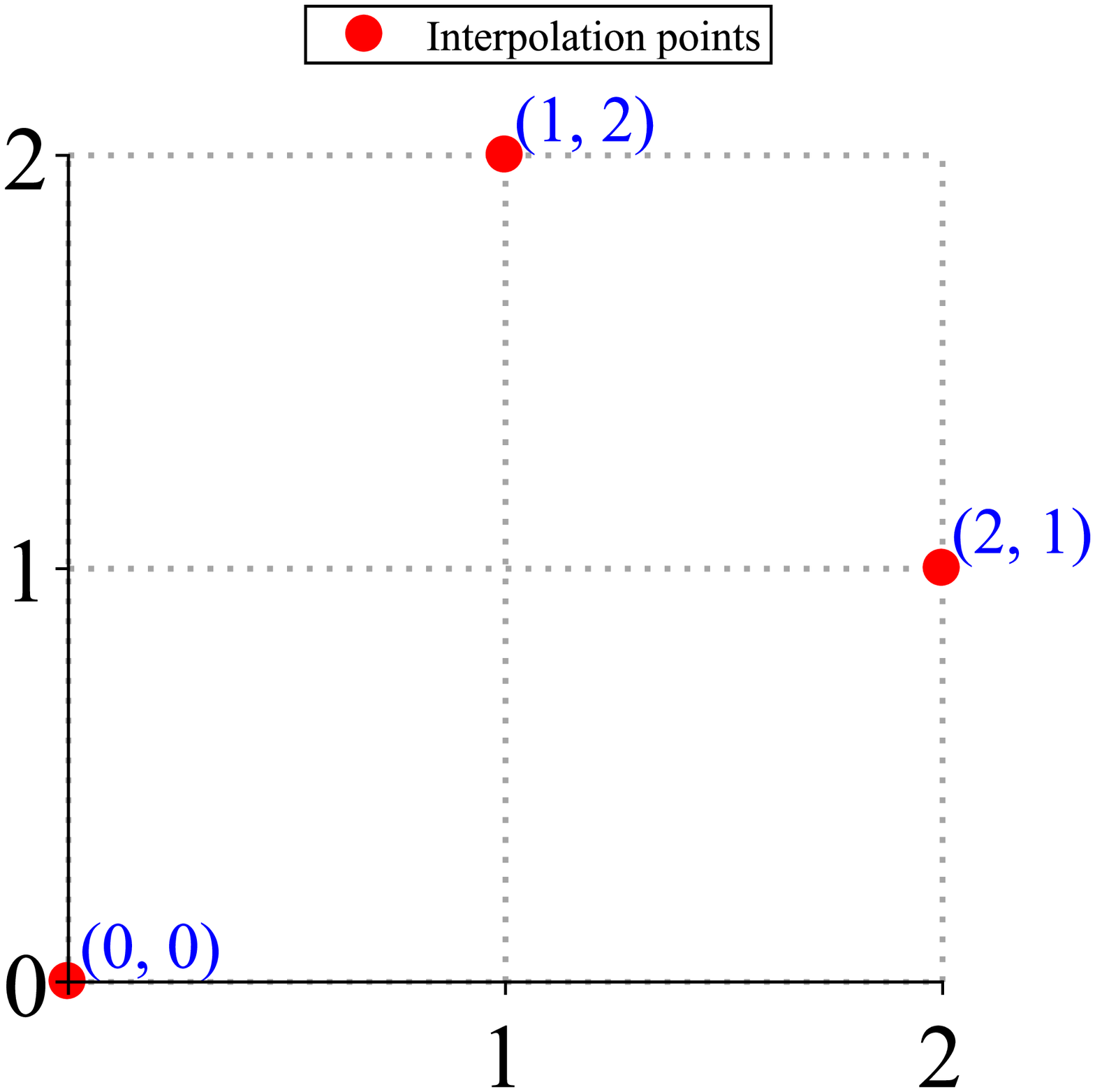}
		  \label{fig:subfig:Lag1}
		 }
		 \hspace{0.1\textwidth}
		  \subfigure[Monomial interpolating basis]{
		  \includegraphics[width=0.35\textwidth]{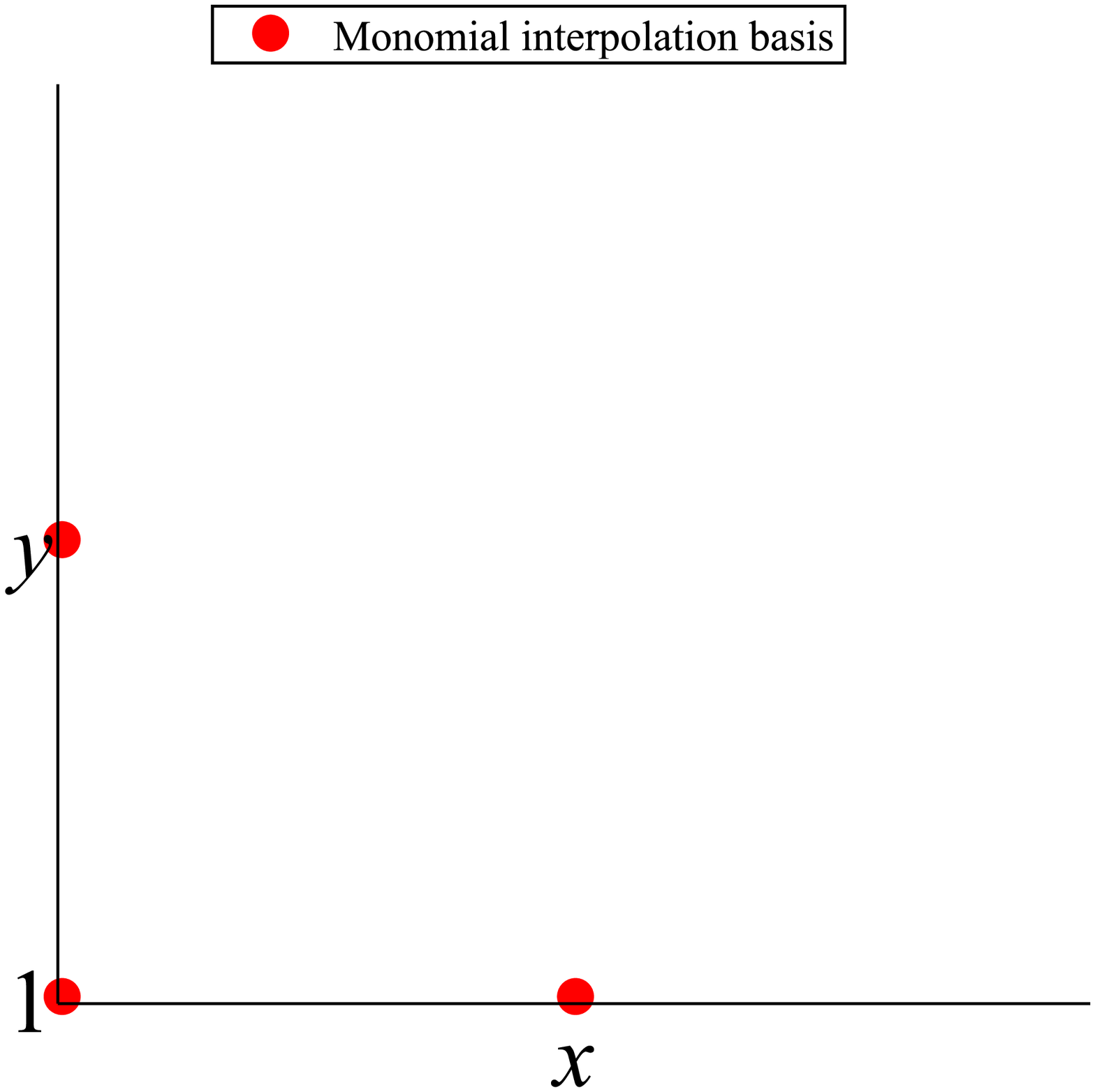}
		  \label{fig:subfig::Lag1}
		 }
		 \caption{Lagrange polynomial interpolation}
		 \label{fig:lag}
		\end{figure}
	
		 For the monomial ordering $ \grlex(y \prec x)$ and interpolation conditions
			\begin{displaymath}
			    \Delta:= \left\{
					\begin{aligned} \abovedisplayskip=2pt \belowdisplayskip=2pt
					    \delta_{(0, 0)} \circ \{1\} \\
					    \delta_{(1, 2)} \circ \{1\} \\
						\delta_{(2, 1)} \circ \{1\}
					\end{aligned}
				\right.,
			\end{displaymath}
		by \cref{eqn:nonzeroCons2zeroCons} and \cref{rem:finteterm},  we only need to compute the terms of $\e^{\boldsymbol{\uptheta} {\bf X}}$ with degrees $\le 2$, thus we have
			\begin{displaymath}
			    \begin{aligned} \abovedisplayskip=2pt \belowdisplayskip=2pt
			        \e^{(1, 2){\bf X}} & = 1 + (x+2y) + \frac{1}{2}(x^2 + 4xy + 4y^2) + \cdots, \\
			        \e^{(2, 1){\bf X}} & = 1 + (2x+y) + \frac{1}{2}(4x^2 + 4xy + y^2) + \cdots. \\
			    \end{aligned}
			\end{displaymath}		
		By \cref{thm:sevptsbasis}, we get
			\begin{displaymath}
				\begin{aligned}
				    \{p_1, p_2, p_3\}  & = \{1,  \e^{(1, 2){\bf X}} \cdot 1,  \e^{(2, 1){\bf X}} \cdot 1\}\\
				       & = \{1, \cdots + \frac{1}{2}(x^2+4xy+4y^2) +(x+2y)+1, \\
					   & \phantom{M} \cdots + \frac{1}{2}(4x^2+4xy+y^2) +(2x+y)+1\}.
				\end{aligned}
			\end{displaymath}
		By \cref{alg:revredbasis} we get a ``reverse" reduced basis $\{q_1, q_2, q_3\}=\{1, \cdots + \frac{1}{2} x^2 + 2xy + 2y^2 + x + 2y, \dots + \frac{7}{4}x^2 + xy -\frac{1}{2}y^2 + \frac{3}{2}x\}$.

			At last, by \cref{thm:sevptsbasis} we get the $\prec$-minimal monomial interpolating basis $T=\{\lm(q_1), \lm(q_2), \lm(q_3)\}=\{1, y, x\}$.  It is easy to verify that
			\begin{displaymath}
			    T_\Delta=\begin{blockarray}{cccl} 
			           1 & y & x &  \\
			        \begin{block}{(ccc)l}
			            1 & 0 & 0 & \delta_{(0, 0)} \circ \{1\} \\
						1 & 2 & 1 & \delta_{(1, 2)} \circ \{1\} \\
						1 & 1 & 2 & \delta_{(2, 1)} \circ \{1\} \\
			        \end{block}
			    \end{blockarray}.
			\end{displaymath} It is obvious that $T_{\Delta}$ is non-singular. So $T$ is indeed an interpolating basis for $\Delta$.
	\end{example}

	\begin{example}[Hermite polynomial interpolation]
	    \begin{figure}[htbp!] \setlength{\abovecaptionskip}{0pt} \setlength{\belowcaptionskip}{0pt}		
		 \centering
		 \subfigure[Interpolation points]{
		  \includegraphics[width=0.35\textwidth]{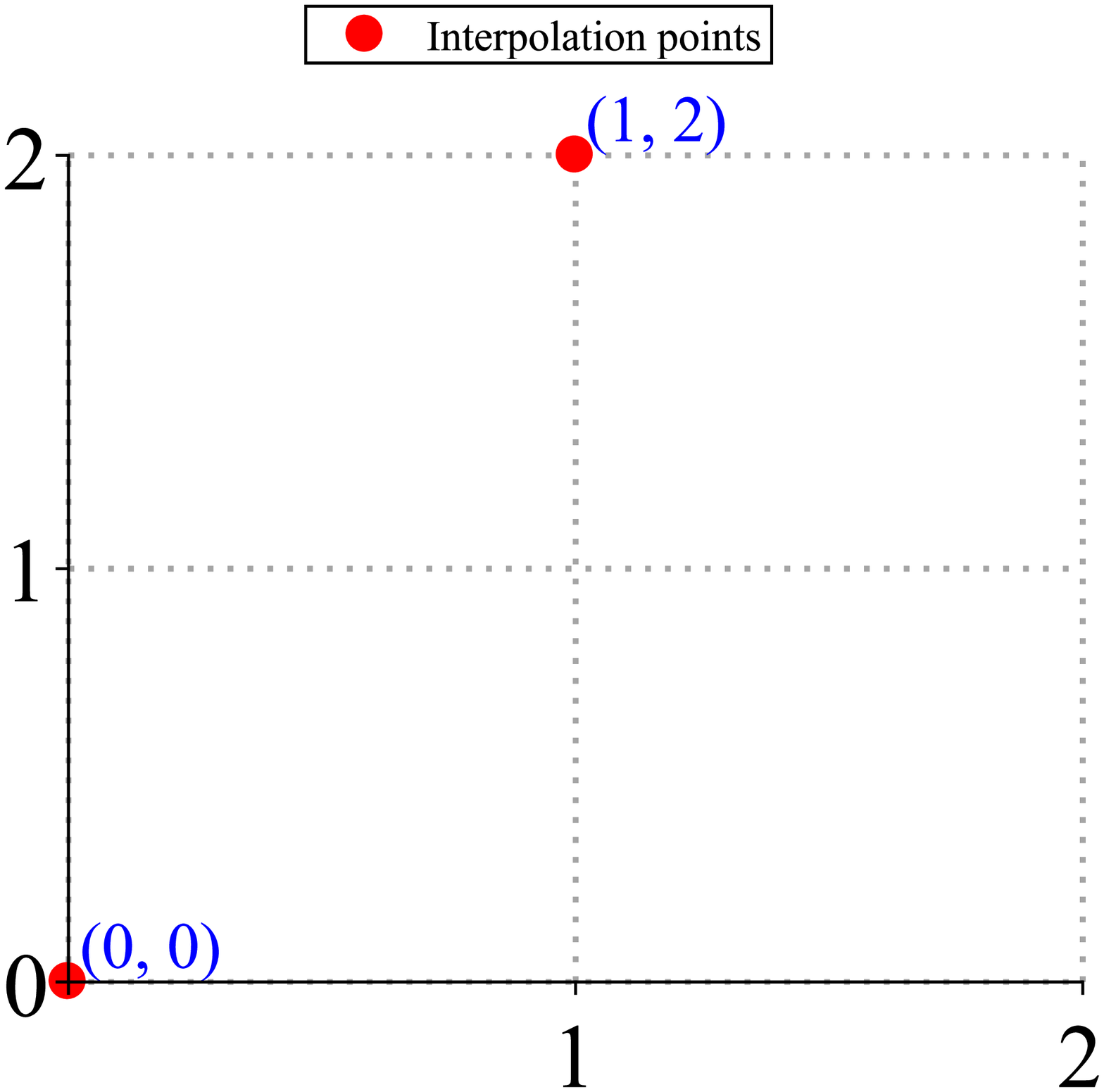}
		  \label{fig:subfig:Her1}
		 }
		 \hspace{0.1\textwidth}
		  \subfigure[Monomial interpolating basis]{
		  \includegraphics[width=0.35\textwidth]{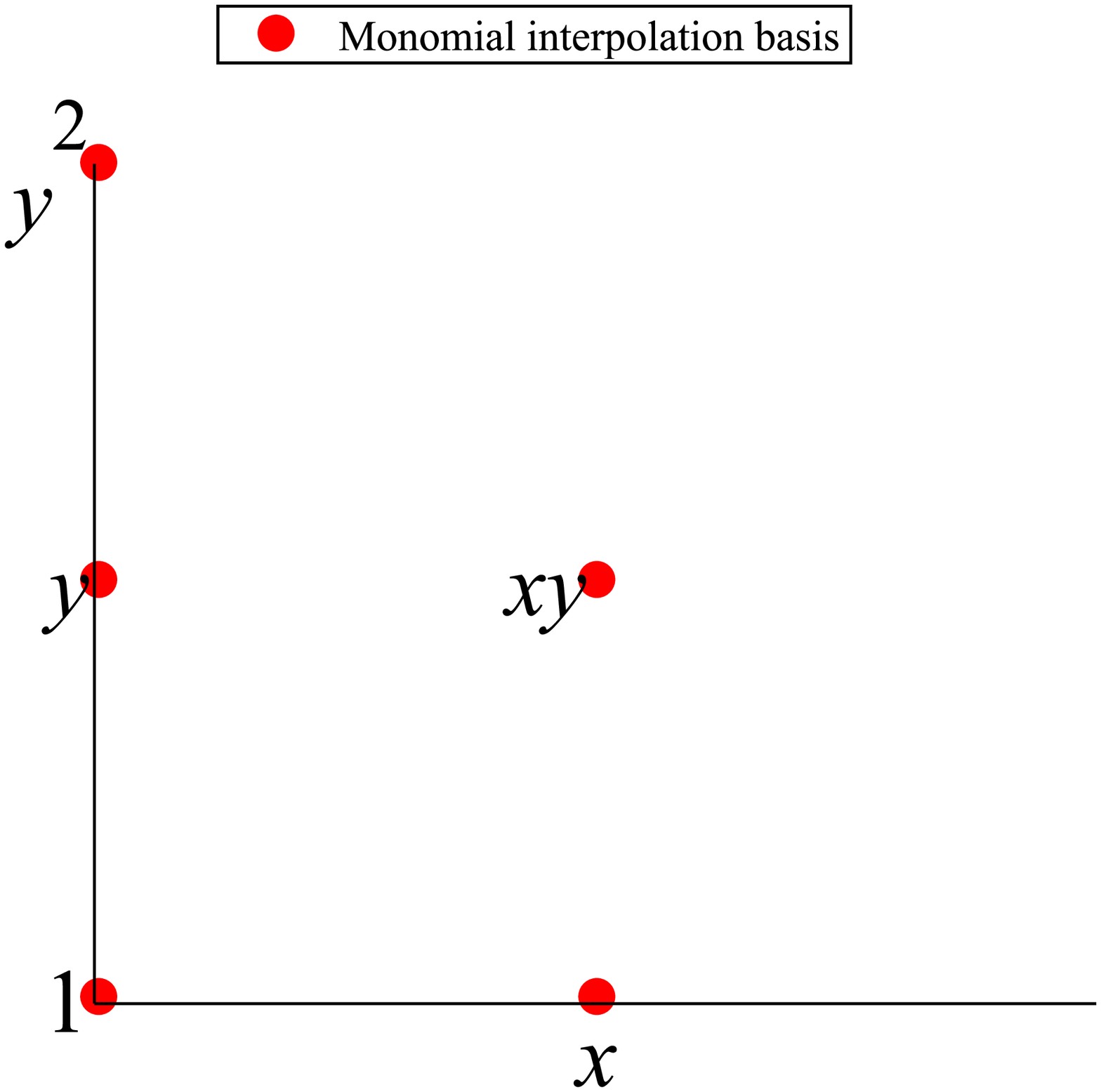}
		  \label{fig:subfig:Her2}
		 }
		 \caption{Hermite polynomial interpolation}
		 \label{fig:subfig:Her}
		\end{figure}
	
		For the monomial ordering $\grlex(y \prec x)$ and interpolation conditions
			\begin{displaymath}
			    \Delta:= \left\{
					\begin{array}{l} \abovedisplayskip=2pt \belowdisplayskip=2pt
					    \delta_{(0, 0)} \circ \{1, D_x, \frac{1}{2}D_x^2+D_y\} \\
					    \delta_{(1, 2)} \circ \{1, D_x\}						
					\end{array},
				\right.
			\end{displaymath}
			by \cref{eqn:nonzeroCons2zeroCons} and \cref{rem:finteterm},  we  compute the terms of $\e^{\boldsymbol{\uptheta} {\bf X}}$ with degrees $\le 2$, thus we have
			\begin{displaymath}			
			        \e^{(1, 2){\bf X}} = 1 + (x+2y) + \frac{1}{2}(x^2 + 4xy + 4y^2) + \cdots.	
			\end{displaymath}
			By \cref{thm:sevptsbasis}, we get
			\begin{displaymath}
			    \begin{aligned} \abovedisplayskip=2pt \belowdisplayskip=2pt
			        \{P_1, P_2, P_3, P_4, P_5\} &= \{1, x, \frac{1}{2}x^2+y, \e^{(1, 2){\bf X}} \cdot 1, \e^{(1, 2){\bf X}} \cdot x\} 	\\
										&= \{1, x, \frac{1}{2}x^2+y,	\\		
										& \phantom{D1}\cdots+\frac{1}{2}(x^2+4xy+4y^2) + (x+2y) +1, \\
										& \phantom{D1}\cdots+\frac{1}{2}(x^3+4x^2y+4xy^2) + (x^2+2xy) +x\}.
			    \end{aligned}
			\end{displaymath}
			By \cref{alg:revredbasis}, we get a ``reverse" reduced basis $\{q_1, q_2, q_3, q_4, q_5\}=\{1, x, \frac{1}{2}x^2+y, \cdots+\frac{1}{2}(-x^2+4xy+4y^2), \cdots+\frac{1}{2}(x^3+4x^2y+4xy^2) + (x^2+2xy)\}$.

			At last, by \cref{thm:sevptsbasis} we get the $\prec$-minimal monomial interpolating basis $T=\{\lm(q_1), \lm(q_2), \lm(q_3), \lm(q_4), \lm(q_5)\}=\{1, x, y,  y^2, xy\}$.  It is easy to verify that
			\begin{displaymath}
			    T_\Delta=\begin{blockarray}{cccccl} 
			           1 & x & y & y^2 & xy \\
			        \begin{block}{(ccccc)l}
			            1 &   0 &0 & 0 & 0& \delta_{(0, 0)} \circ \{1\} \\
						0 &   1 &0 & 0 & 0& \delta_{(0, 0)} \circ \{D_x\} \\
						0 &   0 &1 & 0 & 0& \delta_{(0, 0)} \circ \{\frac{1}{2}D_x^2+D_y\} \\		
						1 &   1 &2 & 4 & 2& \delta_{(1, 2)} \circ \{1 \} \\
						0 &   1 &0 & 0 & 2& \delta_{(1, 2)} \circ \{D_x \}\\
			        \end{block}
			    \end{blockarray}.
			\end{displaymath}
			It is obvious that $T_{\Delta}$ is non-singular. So $T$ is indeed an interpolating basis for $\Delta$.
	\end{example}
	
	



	
	


\section{Conclusions}
\label{sec:conclusions}

For a multivariate polynomial interpolation problem, by the concept of ``reverse" reduced basis, for any given monomial ordering $\prec$, we present a new method  to read off the $\prec$-minimal monomial interpolating basis from the set of monomials appearing in the interpolation conditions. Our algorithm only uses linear eliminating, so it has a good performance.



\bibliographystyle{siamplain}
\bibliography{PolymonialSystemSolving}
\end{document}